\newtheorem{theorem}{Theorem}[section]
\newtheorem{lemma}[theorem]{Lemma}
\theoremstyle{definition}
\theoremstyle{remark}
\newtheorem{remark}{Remark}[section]
\newcommand{\leref}{Lemma~\ref}
\newcommand{\thref}{Theorem~\ref}
\title[]{On utility maximization with derivatives under model uncertainty}
\author[]{Erhan Bayraktar} \thanks{This research was supported in part by the National Science Foundation under grants DMS 0906257 and DMS 1118673.}  
\address{Department of Mathematics, University of Michigan}
\email{erhan@umich.edu}
\author[]{Zhou Zhou}
\address{Department of Mathematics, University of Michigan}
\email{zhouzhou@umich.edu}
\date{\today}
\keywords{robust utility maximization, model uncertainty, semi-static hedging.}
\begin{document}
\maketitle

\begin{abstract}

We consider the robust utility maximization using a static holding in derivatives and a dynamic holding in the stock. There is no fixed model for the price of the stock but we consider a set of probability measures (models) which are not necessarily dominated by a fixed probability measure.
By assuming that the set of physical probability measures is convex and weakly compact, we obtain the duality result and the existence of an optimizer.
\end{abstract}

\section{Set-up}
We assume that in the market, there is a single risky asset  at discrete times $t=1,\dotso,T$. Let $S=(S_t)_{t=1}^T$ be the canonical process on the path space  $\mathbb{R}_+^T$, i.e, for  $(s_1,\dotso,s_T) \in \mathbb{R}_+^T$ we have that $S_i(s_1,\dotso,s_T)=s_i$. The random variable $S_i$ represents the price of the risky asset  at time $t=i$. We denote the current spot price of the asset as $S_0=s_0$. In addition, we assume that in the market there are a finite number of options $g_i:\ \mathbb{R}_+^T\rightarrow\mathbb{R},\ i=1,\dotso,N$, which can be bought or sold at time $t=0$ at price $g_i^0$. We assume $g_i$ is continuous and $g_i^0=0$. 

Let 
$$\mathcal{M}:=\{\mathbb{Q} \text { probability measure on } \mathbb{R}_+^T:\ S=(S_i)_{i=1}^T \text{ is a } \mathbb{Q}-\text{martingale};$$
$$\text{ for } i=1,\dotso,N,\ \mathbb{E}_\mathbb{Q}g_i=0.\}$$
We make the standing assumption that $\mathcal{M}\neq\emptyset$.

Let us consider the semi-static trading strategies consisting of the sum of a static option portfolio and a dynamic strategy in the stock. We will denote by $\Delta$ the predictable process corresponding to the holdings on the stock. More precisely, the semi-static strategies generate payoffs of the form:
$$x+\sum_{i=1}^N h_i g_i(s_1,\dotso,s_n)+\sum_{j=1}^{T-1}\Delta_j(s_1,\dotso,s_j)(s_{j+1}-s_j)=:x+h\cdot g+(\Delta\cdot S)_T,\ s_1,\dotso,s_T\in\mathbb{R}_+,$$
where $x$ is the initial wealth, $h=(h_1,\dotso,h_N)$ and $\Delta=(\Delta_1,\dotso,\Delta_{T-1})$.

We will assume that $U$ is a function defined on $\mathbb{R}_+$ that is bounded, strictly increasing, strictly concave, continuously differentiable and satisfies the Inada conditions
$$U'(0)=\lim_{x\rightarrow 0}U'(x)=\infty,$$
$$U'(\infty)=\lim_{x\rightarrow\infty}U'(x)=0.$$ 
We also assume that $U$ has asymptotic elasticity strictly less than 1, i.e,
$$AE(U)=\limsup_{x\rightarrow\infty}\frac{xU'(x)}{U(x)}<1.$$

Let $\mathcal{P}$ be a set of probability measures on $\mathbb{R}_+^T$, which represents the possible beliefs for the market. We make the following assumptions on $\mathcal{P}$:\\
\textbf{Assumption P:}
\begin{itemize}
\item[(1)] $\mathcal{P}$ is convex and weakly compact.
\item[(2)] For any $\mathbb{P}\in\mathcal{P}$, there exists a $\mathbb{Q}\in\mathcal{M}$ that is equivalent to $\mathbb{P}$.
\end{itemize} 
Note that the second condition is natural in the sense that every belief  
in the market model is reasonable concerning no arbitrage, e.g., see \cite{Nutz2}. We consider the robust utility maximization problem
$$\hat u(x)=\sup_{(\Delta, h)}\inf_{\mathbb{P}\in\mathcal{P}}\mathbb{E}_\mathbb{P}U\left(x+(\Delta\cdot S)_T+h\cdot g\right).$$

\section{Main result}
\thref{theorem1} and \thref{theorem2} are the main results of this paper. We will first introduce some spaces and value functions concerning the duality.

Let
\begin{equation}
V(y)=\sup_{x>0}[U(x)-xy)],\ \ \ y>0,
\notag\end{equation}
and
\begin{equation}
I:=-V'=(U')^{-1}.
\notag\end{equation}
For any $\mathbb{P}\in\mathcal{P}$, we define some spaces as follows, where the (in)equalities are in the sense of $\mathbb{P}$-a.s.
\begin{itemize}
\item $\mathfrak{X}_\mathbb{P}(x,h)=\{X:\ X_0=x,\ x+(\Delta\cdot S)_T+h\cdot g\geq 0,\ \text{for some }\Delta\} $
\item $\mathcal{Y}_\mathbb{P}(y)=\{Y\geq 0:\ Y_0=y,\ XY \text{ is a $\mathbb{P}$-super-martingale, }\forall X\in\mathfrak{X}_\mathbb{P}(1,0)\}$
\item $\mathfrak{Y}_\mathbb{P}(y)=\{Y\in\mathcal{Y}_\mathbb{P}(y):\ \mathbb{E}_\mathbb{P}\left(Y_T(X_T+h\cdot g)
\right)\leq xy, \forall X\in\mathfrak{X}_\mathbb{P}(x,h)\}$
\item $\mathcal{C}_\mathbb{P}(x,h)=\{c\in L_+^0(\mathbb{P}):\ c\leq X_T+h\cdot g, \text{ for some } X\in\mathfrak{X}_\mathbb{P}(x,h)\}$
\item $\mathcal{C}_\mathbb{P}(x)=\bigcup_h\mathcal{C}_\mathbb{P}(x,h)$
\item $\mathcal{D}_\mathbb{P}(y)=\{d\in L_+^0(\mathbb{P}):\ d\leq Y_T,\ \text{for some } Y\in\mathfrak{Y}_\mathbb{P}(y)\}$
\end{itemize}
Denote
\begin{equation}\label{cd}
\mathcal{C}_\mathbb{P}=\mathcal{C}_\mathbb{P}(1),\ \mathcal{D}_\mathbb{P}=\mathcal{D}_\mathbb{P}(1).
\end{equation}
It is easy to see that for $x>0,\ \mathcal{C}_\mathbb{P}(x)=x\mathcal{C}_\mathbb{P},\ \mathcal{D}_\mathbb{P}(x)=x\mathcal{D}_\mathbb{P}$. Define the value of the optimization problem under $\mathbb{P}\in\mathcal{P}$:
\begin{equation}
u_\mathbb{P}(x)=\sup_{c\in\mathcal{C}_\mathbb{P}(x)}\mathbb{E}_\mathbb{P}U(c),\ \ \ v_\mathbb{P}(y)=\inf_{d\in\mathcal{D}_\mathbb{P}(y)}\mathbb{E}_\mathbb{P}V(d).
\notag\end{equation}
Then define
\begin{equation}
u(x)=\inf_{\mathbb{P}\in\mathcal{P}}u_\mathbb{P}(x),\ \ \ v(y)=\inf_{\mathbb{P}\in\mathcal{P}}v_\mathbb{P}(y).
\notag\end{equation}
Below are the main results of this paper.
\begin{theorem}\label{theorem1}
Under \textbf{Assumption P}, we have
\begin{equation}\label{exchangeability}
u(x)=\hat u(x)=\inf_{\mathbb{P}\in\mathcal{P}}\sup_{(\Delta, h)}\mathbb{E}_\mathbb{P}U\left(x+(\Delta\cdot S)_T+h\cdot g\right),\ x>0.
\end{equation}
Besides, the value function $u$ and $v$ are conjugate, i.e., 
\begin{equation}
u(x)=\inf_{y>0}(v(y)+xy),\ \ \ v(y)=\sup_{x>0}(u(x)-xy).
\notag\end{equation}
\end{theorem}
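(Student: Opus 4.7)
The plan is to reduce to the classical Kramkov--Schachermayer duality for each fixed $\mathbb{P}\in\mathcal{P}$ and then exchange $\inf_\mathbb{P}$ with the supremum on the other side via a version of Sion's minimax theorem, leveraging the convexity and weak compactness of $\mathcal{P}$ from Assumption P(1). The two conjugacy identities will drop out of the pointwise conjugacy together with the same minimax device.

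\emph{Fixed-$\mathbb{P}$ duality.} Fix $\mathbb{P}\in\mathcal{P}$. Assumption P(2) provides $\mathbb{Q}\in\mathcal{M}$ equivalent to $\mathbb{P}$, so the semi-static market is $\mathbb{P}$-arbitrage-free. The Kramkov--Schachermayer theory, extended to accommodate the finite-dimensional static positions $h\cdot g$, yields
$$u_\mathbb{P}(x)=\sup_{(\Delta,h)}\mathbb{E}_\mathbb{P} U\bigl(x+(\Delta\cdot S)_T+h\cdot g\bigr)=\inf_{y>0}\bigl(v_\mathbb{P}(y)+xy\bigr),$$
together with the reverse conjugacy $v_\mathbb{P}(y)=\sup_{x>0}(u_\mathbb{P}(x)-xy)$, strict concavity/convexity of $u_\mathbb{P},v_\mathbb{P}$, and existence of primal and dual optimizers. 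The supremum identity above, with an outer $\inf_\mathbb{P}$, is already the second equality in (\ref{exchangeability}).

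\emph{Minimax and conjugacy.} The bound $\hat u(x)\le u(x)$ is the weak-duality inequality $\sup\inf\le\inf\sup$ combined with the previous step. For the reverse I would apply Sion's theorem to
$$\Phi\bigl(\mathbb{P},(\Delta,h)\bigr):=\mathbb{E}_\mathbb{P} U\bigl(x+(\Delta\cdot S)_T+h\cdot g\bigr),$$
which is linear (hence quasi-convex) in $\mathbb{P}$ and concave in $(\Delta,h)$, with $\mathcal{P}$ convex and weakly compact. For the first conjugate relation, exchanging two infima directly gives
$$u(x)=\inf_\mathbb{P} u_\mathbb{P}(x)=\inf_\mathbb{P}\inf_{y>0}\bigl(v_\mathbb{P}(y)+xy\bigr)=\inf_{y>0}\bigl(v(y)+xy\bigr).$$
For $v(y)=\sup_{x>0}(u(x)-xy)$ I would apply Sion's theorem once more on $\mathcal{P}\times(0,\infty)$ to $(\mathbb{P},x)\mapsto u_\mathbb{P}(x)-xy$: as a supremum of linear-in-$\mathbb{P}$ functionals it is convex in $\mathbb{P}$, and it is concave upper-semicontinuous in $x$, so that $\inf_\mathbb{P}\sup_x$ may be swapped with $\sup_x\inf_\mathbb{P}$ (reducing to a compact $x$-interval by the coercivity $u(x)-xy\to-\infty$ as $x\to\infty$, which holds because $U$ and hence $u$ is bounded). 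This yields $v(y)=\inf_\mathbb{P}\sup_x(u_\mathbb{P}(x)-xy)=\sup_x(u(x)-xy)$.

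\emph{Main obstacle.} The principal difficulty in both applications of Sion's theorem is the semicontinuity hypothesis in the measure variable. A generic predictable $\Delta$ is only Borel, so $(\Delta\cdot S)_T$ is not automatically continuous and $\mathbb{P}\mapsto\mathbb{E}_\mathbb{P} U\bigl(x+(\Delta\cdot S)_T+h\cdot g\bigr)$ need not be weakly lower-semicontinuous even though $U$ is bounded. The remedy I would adopt is to first restrict to bounded continuous $\Delta$, where boundedness of $U$ combined with continuity of $g_i$ and of the canonical process $S$ produces weak continuity of $\Phi(\cdot,(\Delta,h))$ and hence weak lower-semicontinuity of $\mathbb{P}\mapsto u_\mathbb{P}(x)$; apply Sion on that restricted class; and finally recover equality for the full strategy space via an approximation argument anchored in the primal optimizers from the fixed-$\mathbb{P}$ step.
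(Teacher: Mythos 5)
Your proposal is correct and follows essentially the same route as the paper: fixed-$\mathbb{P}$ Kramkov--Schachermayer duality, then a minimax exchange over the convex weakly compact $\mathcal{P}$ after restricting to bounded continuous $\Delta$ (the paper closes the gap to general $\Delta$ by the a.s.\ approximation of measurable functions by bounded continuous ones, fixing the same semicontinuity obstacle you identify). Your direct inf-swap and second Sion application for the conjugacy relations is just an unpacking of the convex-analytic lemmas the paper cites from the literature.
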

\begin{theorem}\label{theorem2}
Let $x_0>0$. Under \textbf{Assumption P}, there exists a probability measure $\hat{\mathbb{P}}\in\mathcal{P}$, an optimal strategy $\hat X_T=x_0+(\hat\Delta\cdot S)_T+\hat h\cdot g\geq 0$, and $\hat Y_T\in\mathfrak{Y}_{\hat{\mathbb{P}}}(\hat y)$ with $\hat y=u'_{\hat{\mathbb{P}}}(x_0)$ such that 
\begin{itemize}
\item[(i)] $u(x_0)=u_{\hat{\mathbb{P}}}(x_0)=\mathbb{E}_{\hat{\mathbb{P}}}[U(\hat X_T)],$
\item[(ii)] $v(\hat y)=u(x_0)-\hat yx_0,$
\item[(iii)] $v(\hat y)=v_{\hat{\mathbb{P}}}(\hat y)=\mathbb{E}_{\hat{\mathbb{P}}}V(\hat Y_T)],$
\item[(iv)] $\hat X_T=I(\hat Y_T)$ and $\hat Y_T=U'(\hat X_T)$, $\hat{\mathbb{P}}$-a.s., and moreover $\mathbb{E}_{\hat{\mathbb{P}}}[\hat X_T\hat Y_T]=x_0\hat y$.
\end{itemize}
\notag\end{theorem}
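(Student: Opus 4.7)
The plan is to reduce to a single-prior problem: use the weak compactness of $\mathcal{P}$ to extract a worst-case measure $\hat{\mathbb{P}}$, apply the classical Kramkov--Schachermayer duality under $\hat{\mathbb{P}}$, and then lift the single-prior duality to the robust problem via the conjugacy established in \thref{theorem1}. For the existence of $\hat{\mathbb{P}}$, I would pick a minimizing sequence $\mathbb{P}_n\in\mathcal{P}$ with $u_{\mathbb{P}_n}(x_0)\downarrow u(x_0)$; by Assumption~P(1) a subsequence converges weakly to some $\hat{\mathbb{P}}\in\mathcal{P}$, and since $u_{\hat{\mathbb{P}}}(x_0)\geq u(x_0)$ is automatic, item~(i) will follow from the weak lower semi-continuity of $\mathbb{P}\mapsto u_{\mathbb{P}}(x_0)$ on $\mathcal{P}$.

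Once $\hat{\mathbb{P}}$ is fixed, Assumption~P(2) supplies $\mathbb{Q}\in\mathcal{M}$ equivalent to $\hat{\mathbb{P}}$, which serves as an equivalent martingale measure for the semi-static market (stock plus the zero-priced options $g_i$) under the single prior $\hat{\mathbb{P}}$. The standing assumptions on $U$ (bounded, strictly concave, Inada, $AE(U)<1$) then license the classical Kramkov--Schachermayer duality on this single-prior market with admissible cone $\mathcal{C}_{\hat{\mathbb{P}}}(x_0)$, delivering an optimal $\hat X_T=x_0+(\hat\Delta\cdot S)_T+\hat h\cdot g\geq 0$ attaining $u_{\hat{\mathbb{P}}}(x_0)$, a dual optimal $\hat Y_T\in\mathfrak{Y}_{\hat{\mathbb{P}}}(\hat y)$ with $\hat y=u'_{\hat{\mathbb{P}}}(x_0)$ attaining $v_{\hat{\mathbb{P}}}(\hat y)$, the first-order identifications $\hat X_T=I(\hat Y_T)$ and $\hat Y_T=U'(\hat X_T)$, the budget equality $\mathbb{E}_{\hat{\mathbb{P}}}[\hat X_T\hat Y_T]=x_0\hat y$, and the single-prior conjugacy $v_{\hat{\mathbb{P}}}(\hat y)=u_{\hat{\mathbb{P}}}(x_0)-\hat y x_0$. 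This is exactly item~(iv).

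For items~(ii) and (iii) I would use two inequalities: $v(\hat y)\leq v_{\hat{\mathbb{P}}}(\hat y)=u(x_0)-\hat y x_0$, from the trivial bound $v\leq v_{\hat{\mathbb{P}}}$ combined with the single-prior conjugacy just obtained and item~(i); and $v(\hat y)\geq u(x_0)-\hat y x_0$, from $v(\hat y)=\sup_{x>0}(u(x)-x\hat y)$ in \thref{theorem1}. These collapse to equalities, yielding (ii) and the equality $v(\hat y)=v_{\hat{\mathbb{P}}}(\hat y)$ in (iii).

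The main obstacle is the weak lower semi-continuity of $\mathbb{P}\mapsto u_{\mathbb{P}}(x_0)$. The admissible cone $\mathcal{C}_{\mathbb{P}}(x_0)$ depends on $\mathbb{P}$ through a $\mathbb{P}$-a.s.\ nonnegativity constraint, and a generic predictable $\Delta$ is not continuous in $(s_1,\dotso,s_T)$, so one cannot directly push weak convergence $\mathbb{P}_n\to\hat{\mathbb{P}}$ through $\mathbb{E}_{\mathbb{P}_n}U(x_0+(\Delta\cdot S)_T+h\cdot g)$. Boundedness of $U$ provides uniform integrability and a Fatou-type control, continuity of the $g_i$ handles the static hedges, and a careful approximation of $\Delta$ by path-continuous strategies (so that the integrand becomes a bounded continuous function on $\mathbb{R}_+^T$) should give the supremum of continuous functions of $\mathbb{P}$, hence the required LSC.
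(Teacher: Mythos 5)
Your proposal is correct and follows essentially the same route as the paper: single-prior Kramkov--Schachermayer duality under a worst-case measure $\hat{\mathbb{P}}$ obtained from weak compactness of $\mathcal{P}$ together with lower semicontinuity of $\mathbb{P}\mapsto u_{\mathbb{P}}(x_0)$, followed by the conjugacy from \thref{theorem1} to deduce (ii) and (iii) --- which is precisely the content of the paper's citations of Theorems 3.1--3.2 of Kramkov--Schachermayer and of the convex-analytic Lemmas 9--12 of Denis--Kervarec. The only point you understate is that the applicability of the classical duality to the semi-static cone $\mathcal{C}_{\hat{\mathbb{P}}}(x_0)$ is not automatic from Assumption P(2) alone but is exactly what \thref{theorem 3.1} establishes via the semi-static superhedging theorem.
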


\section{Proof of the main results}

This section is devoted to the proof of the main results, \thref{theorem1} and \thref{theorem2}.
\begin{proof}[Proof of \thref{theorem1}]
For $\mathbb{P}\in\mathcal{P}$ and any measurable function $f$ defined on $\mathbb{R}_+^j$, there exists a sequence of continuous functions $(f_n)_{n=1}^\infty$ converging to $f$ $\mathbb{P}$-a.s. (see e.g., Page 70 in \cite{Doob 1}). By a truncation argument, $f_n$ can be chosen to be bounded. Therefore, we have
\begin{equation}
\sup_{(\Delta, h)}\mathbb{E}_\mathbb{P}U\left(x+(\Delta\cdot S)_T+h\cdot g\right)=\sup_{(\Delta, h),\ \Delta\in C_b}\mathbb{E}_\mathbb{P}U\left(x+(\Delta\cdot S)_T+h\cdot g\right),
\notag\end{equation}
where $\Delta\in C_b$ means that each component $\Delta_j$ is a continuous bounded function on $\mathbb{R}_+^j,\ j=1,\dotso T-1$. Hence,
\begin{eqnarray}
\hat u(x)&=&\sup_{(\Delta, h)}\inf_{\mathbb{P}\in\mathcal{P}}\mathbb{E}_\mathbb{P}U\left(x+(\Delta\cdot S)_T+h\cdot g\right)\notag\\
&\geq&\sup_{(\Delta, h),\ \Delta\in C_b}\inf_{\mathbb{P}\in\mathcal{P}}\mathbb{E}_\mathbb{P}U\left(x+(\Delta\cdot S)_T+h\cdot g\right)\notag\\
\label{minimax}&=&\inf_{\mathbb{P}\in\mathcal{P}}\sup_{(\Delta, h),\ \Delta\in C_b}\mathbb{E}_\mathbb{P}U\left(x+(\Delta\cdot S)_T+h\cdot g\right)\\
&=&\inf_{\mathbb{P}\in\mathcal{P}}\sup_{(\Delta, h)}\mathbb{E}_\mathbb{P}U\left(x+(\Delta\cdot S)_T+h\cdot g\right)\notag\\
&\geq&\hat u(x),\notag
\end{eqnarray}
where \eqref{minimax} follows from the minimax theorem. Hence \eqref{exchangeability} is proved. The rest of this theorem can be proved following the arguments in the proofs of Lemmas 7 and 8 in \cite{Denis1}.
\end{proof}

\begin{theorem}\label{theorem 3.1}
Under \textbf{Assumption P}(2), for any given $\mathbb{P}\in\mathcal{P}$, the set $\mathcal{C}_\mathbb{P},\ \mathcal{D}_\mathbb{P}$ defined in \eqref{cd} satisfy the properties in Proposition 3.1 in \cite{Schachermayer2}.
\end{theorem}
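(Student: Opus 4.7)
The plan is to use Assumption P(2) to transport the classical Kramkov--Schachermayer machinery from the purely dynamic setting to the present semi-static one by reinterpreting options as auxiliary assets. First I would fix $\mathbb{P} \in \mathcal{P}$ and select $\mathbb{Q}\in\mathcal{M}$ with $\mathbb{Q}\sim\mathbb{P}$, as granted by P(2). Under this $\mathbb{Q}$, the process $S$ is a martingale and $\mathbb{E}_\mathbb{Q}g_i=0$, so every admissible semi-static payoff $1+h\cdot g+(\Delta\cdot S)_T\geq 0$ has $\mathbb{Q}$-expectation at most $1$. In particular, the conditional density process $Z_t=\mathbb{E}_\mathbb{P}[d\mathbb{Q}/d\mathbb{P}\mid\mathcal{F}_t]$ lies in $\mathfrak{Y}_\mathbb{P}(1)$, so $\mathcal{D}_\mathbb{P}$ contains a strictly positive element, and $\mathcal{C}_\mathbb{P}$ contains the constant $1$.

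Second, I would verify convexity and solidness of $\mathcal{C}_\mathbb{P}$ and $\mathcal{D}_\mathbb{P}$, which are immediate from the definitions once one observes that the union over $h\in\mathbb{R}^N$ preserves convexity since the set of $h$'s is a linear space. Boundedness in $L^0(\mathbb{P})$ for $\mathcal{C}_\mathbb{P}$ follows from $\mathbb{E}_\mathbb{Q}[c]\leq 1$ together with $\mathbb{Q}\sim\mathbb{P}$, and for $\mathcal{D}_\mathbb{P}$ from the supermartingale test against $1\in\mathcal{C}_\mathbb{P}$. The bipolar polar relations then follow from the Kramkov--Schachermayer bipolar theorem applied in an enlarged discrete-time market in which the payoffs $g_i$ are treated as zero-cost assets held only statically; this enlarged market admits $\mathbb{Q}$ as an equivalent martingale measure by construction, so Proposition 3.1 of \cite{Schachermayer2} applies directly.

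The main obstacle is the $L^0(\mathbb{P})$-closedness of $\mathcal{C}_\mathbb{P}$, because $\mathcal{C}_\mathbb{P}=\bigcup_{h\in\mathbb{R}^N}\mathcal{C}_\mathbb{P}(1,h)$ is a union over an unbounded finite-dimensional parameter, so the classical closedness for a fixed $h$ is not enough. Suppose $c_n\in\mathcal{C}_\mathbb{P}(1,h_n)$ converges in probability to some $c$, with $c_n\leq 1+h_n\cdot g+(\Delta^n\cdot S)_T$. If $|h_n|\to\infty$, I would rescale by $|h_n|$ and extract a subsequential limit $\bar h$ with $|\bar h|=1$ and corresponding $\bar\Delta$ obtained by a standard discrete-time compactness argument (available thanks to the equivalent martingale measure $\mathbb{Q}$ for the enlarged market) to get $\bar h\cdot g+(\bar\Delta\cdot S)_T\geq 0$ $\mathbb{P}$-a.s.; taking $\mathbb{Q}$-expectations forces equality $\mathbb{P}$-a.s., so $\bar h$ lies in the finite-dimensional subspace of redundant option combinations. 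Quotienting by this subspace, we may take $(h_n)$ bounded, extract $h_n\to h$, and conclude $c\in\mathcal{C}_\mathbb{P}(1,h)$ by the classical closedness of the attainable set for fixed $h$. The $L^0$-closedness of $\mathcal{D}_\mathbb{P}$ is then inherited via the bipolar relation with the now-closed $\mathcal{C}_\mathbb{P}$.
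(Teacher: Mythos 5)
Your outline gets the easy parts right (convexity, solidity, $1\in\mathcal{C}_\mathbb{P}$, $L^0$-boundedness via the equivalent $\mathbb{Q}\in\mathcal{M}$ from Assumption P(2), and the fact that the density process of $\mathbb{Q}$ lies in $\mathfrak{Y}_\mathbb{P}(1)$), and these match the paper. But there is a genuine gap at the central step: you assert that the polar relation characterizing $\mathcal{C}_\mathbb{P}$ ``follows from the Kramkov--Schachermayer bipolar theorem applied in an enlarged discrete-time market in which the payoffs $g_i$ are treated as zero-cost assets held only statically,'' so that Proposition 3.1 of \cite{Schachermayer2} ``applies directly.'' It does not. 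That proposition is proved for a market in which every asset is traded dynamically; a market in which the $g_i$ may only be held statically is not of that form, and if you instead adjoin the price processes $\mathbb{E}_\mathbb{Q}[g_i\mid\mathcal{F}_t]$ as dynamically traded assets you change both the primal set (dynamic trading in the $g_i$ generates strictly more payoffs than static holding) and the dual set (the relevant martingale measures are no longer $\mathcal{M}(\mathbb{P})=\{\mathbb{Q}\in\mathcal{M}:\mathbb{Q}\sim\mathbb{P}\}$). The hard direction --- if $\mathbb{E}_\mathbb{P}[cd]\le 1$ for all $d\in\mathcal{D}_\mathbb{P}$ then $c$ is dominated by $1+h\cdot g+(\Delta\cdot S)_T$ for some $(\Delta,h)$ --- is exactly a \emph{semi-static superhedging} statement, and the paper obtains it by first noting that the hypothesis implies $\sup_{\mathbb{Q}\in\mathcal{M}(\mathbb{P})}\mathbb{E}_\mathbb{Q}[c]\le 1$ and then invoking the superhedging theorem of \cite{Nutz2}, which is precisely the tool built for static positions in options plus dynamic trading in $S$. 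Without citing such a result (or reproving it), your bipolar step is unsupported, and since your closedness argument for $\mathcal{D}_\mathbb{P}$ is ``inherited via the bipolar relation,'' that part collapses with it.

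A secondary remark on efficiency: your direct compactness argument for the $L^0$-closedness of $\mathcal{C}_\mathbb{P}$ (normalizing $|h_n|$, extracting a limiting direction $\bar h$, and quotienting by redundant option combinations) is the kind of argument that can be made to work in this discrete-time setting, but it is unnecessary. Once the polar characterization of $\mathcal{C}_\mathbb{P}$ is in hand, closedness in measure is immediate from Fatou's lemma: if $c_n\to c$ $\mathbb{P}$-a.s.\ with $c_n\in\mathcal{C}_\mathbb{P}$, then $\mathbb{E}_\mathbb{P}[cd]\le\liminf_n\mathbb{E}_\mathbb{P}[c_n d]\le 1$ for every $d\in\mathcal{D}_\mathbb{P}$, hence $c\in\mathcal{C}_\mathbb{P}$. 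This is the route the paper takes, and it avoids all of the delicate subsequence extractions. For the reverse polar relation (membership in $\mathcal{D}_\mathbb{P}$), the paper does use Proposition 3.1 of \cite{Schachermayer2} legitimately --- but only to produce a supermartingale $\tilde Y\in\mathcal{Y}_\mathbb{P}(1)$ dominating $d$, which is then modified at the terminal time; note that this works because $\mathcal{C}_\mathbb{P}$ is \emph{larger} than the classical set, so the polar inequality against $\mathcal{C}_\mathbb{P}$ is a stronger hypothesis, not because the semi-static market fits the classical framework.
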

\begin{proof}
It is obvious that (i) $\mathcal{C}_\mathbb{P}$ and $\mathcal{D}_\mathbb{P}$ are convex and solid, (ii) $\mathcal{C}_\mathbb{P}$ contains the constant function $1$, and (iii) for any  $c\in\mathcal{C}_\mathbb{P}, d\in\mathcal{D}_\mathbb{P}$, we have $\mathbb{E}_\mathbb{P}[cd]\leq 1$. We will finish the proof by showing the next four lemmas, where we use the notation $d\mathbb{Q}/d\mathbb{P}$ to denote both the Radon-Nikodym process and the Radon-Nikodym derivative on the whole space $\mathbb{R}_+^T$, whenever $\mathbb{Q}\sim\mathbb{P}$.
\begin{lemma}
$\mathcal{C}_\mathbb{P}$ is bounded in $L^0(P)$.
\end{lemma}
\begin{proof}
By \textbf{Assumption P}(2), there exists $\mathbb{Q}\in\mathcal{M}$ that is equivalent to $\mathbb{P}$. Then
\begin{equation}
\sup_{c\in\mathcal{C}_\mathbb{P}}\mathbb{E}_\mathbb{P}\left[\frac{d\mathbb{Q}}{d\mathbb{P}}c\right]=\sup_{c\in\mathcal{C}_\mathbb{P}}\mathbb{E}_\mathbb{Q}[c]\leq 1.
\notag\end{equation}
Hence,
\begin{eqnarray}
\sup_{c\in\mathcal{C}_\mathbb{P}}\mathbb{P}(c>K)&=&\sup_{c\in\mathcal{C}_\mathbb{P}}\mathbb{P}\left(\frac{d\mathbb{Q}}{d\mathbb{P}}c>\frac{d\mathbb{Q}}{d\mathbb{P}}K\right)\notag\\
&\leq&\sup_{c\in\mathcal{C}_\mathbb{P}}\left[\mathbb{P}\left(\frac{d\mathbb{Q}}{d\mathbb{P}}\leq\frac{1}{\sqrt{K}}\right)+ \mathbb{P}\left(\frac{d\mathbb{Q}}{d\mathbb{P}}c>\sqrt{K}\right)\right]\notag\\
&\leq&\mathbb{P}\left(\frac{d\mathbb{Q}}{d\mathbb{P}}\leq\frac{1}{\sqrt{K}}\right)+\frac{1}{\sqrt{K}}\sup_{c\in\mathcal{C}_\mathbb{P}}\mathbb{E}_\mathbb{P}\left[\frac{d\mathbb{Q}}{d\mathbb{P}}c\right]\notag\\
&\leq&\mathbb{P}\left(\frac{d\mathbb{Q}}{d\mathbb{P}}\leq\frac{1}{\sqrt{K}}\right)+\frac{1}{\sqrt{K}}\rightarrow 0,\ \ \ K\rightarrow\infty.\notag
\end{eqnarray}
\end{proof}
\begin{lemma}\label{lemma2}
For $c\in L_+^0(\mathbb{P})$, if $\mathbb{E}_\mathbb{P}[cd]\leq 1,\ \forall d\in\mathcal{D}_\mathbb{P}$, then $c\in\mathcal{C}_\mathbb{P}$.
\end{lemma}
\begin{proof}
It can be shown that for any $\mathbb{Q}\in\mathcal{M}(\mathbb{P}):=\{\mathbb{Q}\in\mathcal{M}:\ \mathbb{Q}\sim\mathbb{P}\}$, the process $\frac{d\mathbb{Q}}{d\mathbb{P}}$ is in $\mathfrak{Y}_\mathbb{P}$. Then for $c\in L_+^0(\mathbb{P})$
\begin{equation}
\sup_{\mathbb{Q}\in\mathcal{M}(\mathbb{P})}\mathbb{E}_\mathbb{Q}[c]=\sup_{\mathbb{Q}\in\mathcal{M}(\mathbb{P})}\mathbb{E}_\mathbb{P}\left[\frac{d\mathbb{Q}}{d\mathbb{P}}c\right]\leq 1.
\notag\end{equation}
Applying  the super-hedging Theorem on Page 6 in \cite{Nutz2}, we have that there exists a trading strategy $(\Delta, h)$, such that $1+(\Delta\cdot S)_T+h\cdot g\geq c,\ \mathbb{P}$-a.s., and thus $c\in\mathcal{C}_\mathbb{P}$.
\end{proof}
\begin{lemma}
For $d\in L_+^0(\mathbb{P})$, if $\mathbb{E}_\mathbb{P}[cd]\leq 1,\ \forall c\in\mathcal{C}_\mathbb{P}$, then $d\in\mathcal{D}_\mathbb{P}$.
\end{lemma}
\begin{proof}
Let $d\in L_+^0(\mathbb{P})$ satisfying $\mathbb{E}_\mathbb{P}[cd]\leq 1,\ \forall c\in\mathcal{C}_\mathbb{P}$. Then applying Proposition 3.1 in \cite{Schachermayer2} (here the space $\mathcal{C}_\mathbb{P}$ is lager than $\mathcal{C}$ defined in (3.1) in \cite{Schachermayer2}), we have that there exists $\tilde Y\in\mathcal{Y}_\mathbb{P}(1)$, such that $0\leq d\leq \tilde Y_T$. Define
$$
Y_k = \left\{ \begin{array}{rl}
 \tilde Y_k, &\mbox{ $k=0,\dotso,T-1,$} \\
  d,\  &\mbox{ $k=T.$}
       \end{array} \right.
$$
Then it's easy to show that $Y\in\mathfrak{Y}_\mathbb{P}$, and therefore $d\in\mathcal{D}_\mathbb{P}$ since $d=Y_T$.
\end{proof}
\begin{lemma}
$\mathcal{C}_\mathbb{P}$ and $\mathcal{D}_\mathbb{P}$ are closed in the topology of convergence in measure.
\end{lemma}
\begin{proof}
Let $\{c_n\}_{n=1}^\infty\subset\mathcal{C}_\mathbb{P}$ converge to some $c$ in probability with respect to $\mathbb{P}$. By passing to a subsequence, we may without loss of generality assume that $c_n\rightarrow c\geq 0,\ \mathbb{P}$-a.s. Then for any $d\in\mathcal{D}_\mathbb{P}$,
\begin{equation}
\mathbb{E}_\mathbb{P}[cd]\leq\liminf_{n\rightarrow\infty}\mathbb{E}_\mathbb{P}[c_nd]\leq 1,
\notag\end{equation}
by Fatou's lemma. Then from \leref{lemma2} we know that $c\in\mathcal{C}_\mathbb{P}$, which shows that $\mathcal{C}_\mathbb{P}$ is closed in the topology of convergence in measure. Similarily, we can show that $\mathcal{D}_\mathbb{P}$ is closed.
\end{proof}
\noindent The proof of \thref{theorem 3.1} is completed at this stage.
\end{proof}
\begin{proof}[Proof of \thref{theorem2}]
We use \thref{theorem 3.1} to show the second equalities in (i) and (iii), as well as (iv), by applying Theorems 3.1 and 3.2 in \cite{Schachermayer2}. The rest of the proof is purely convex analytic and can be done exactly the same way as that in the proofs of Lemmas 9-12 in \cite{Denis1}.
\end{proof} 

\section{A example of $\mathcal{P}$}
We will give an example of $\mathcal{P}$ satisfying \textbf{Assumption P} in this section. We assume that there exists $M>0$, such that
\begin{equation}
\mathcal{M}_M:=\{\mathbb{Q}\in\mathcal{M}:\ \mathbb{Q}(||S||_\infty>M)=0\}\neq\emptyset.
\notag\end{equation}
\begin{remark}
The assumption above is not restrictive. For example, if we are given a finite set of prices of European call options over a finite range of strike prices at each time period, and that the prices are consistent with an arbitrage-free model, then the model can be realized on a finite probability space, see \cite{Davis1} for details.
\end{remark}
Fix $\alpha\in(0,1)$ and $\beta\in(1,\infty)$. Let
\begin{equation}\label{egP}
\mathcal{P}:=\left\{\mathbb{P}:\ \mathbb{P}\sim\mathbb{Q}\text{ and }\alpha\leq\frac{d\mathbb{P}}{d\mathbb{Q}}\leq\beta,\ \text{for some }\mathbb{Q}\in\mathcal{M}_M\right\}.
\end{equation}
\begin{remark}
From the financial point of view, the boundedness condition on the Radon-Nikodym derivative $d\mathbb{P}/d\mathbb{Q}$ means that the physical measures, which represents the personal beliefs, should not be too far away from the martingale measures. 
\end{remark}
\begin{theorem}
$\mathcal{P}$ defined in \eqref{egP} satisfies \textbf{Assumption P}. 
\end{theorem}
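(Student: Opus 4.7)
The plan is to verify the two parts of Assumption P separately, with the first part (convexity and weak compactness) requiring the bulk of the work. Part (2) of the assumption is essentially built into the definition: for each $\mathbb{P}\in\mathcal{P}$, the associated $\mathbb{Q}\in\mathcal{M}_M\subset\mathcal{M}$ satisfies $\alpha\le d\mathbb{P}/d\mathbb{Q}\le\beta$ with $\alpha>0$ and $\beta<\infty$, so $\mathbb{Q}\sim\mathbb{P}$.

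For convexity, I would take $\mathbb{P}_1,\mathbb{P}_2\in\mathcal{P}$ with associated martingale measures $\mathbb{Q}_1,\mathbb{Q}_2\in\mathcal{M}_M$ and densities $f_i=d\mathbb{P}_i/d\mathbb{Q}_i\in[\alpha,\beta]$. For $\lambda\in[0,1]$, let $\mathbb{Q}=\lambda\mathbb{Q}_1+(1-\lambda)\mathbb{Q}_2$; the martingale property of $S$ and the pricing equalities $\mathbb{E}g_i=0$ are preserved under convex combinations, and the support condition $\mathbb{Q}(\|S\|_\infty>M)=0$ is immediate, so $\mathbb{Q}\in\mathcal{M}_M$. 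Writing things against a common dominating measure (e.g.\ $\mathbb{Q}_1+\mathbb{Q}_2$), one checks that $d\mathbb{P}/d\mathbb{Q}$ is a pointwise convex combination of $f_1$ and $f_2$, hence still lies in $[\alpha,\beta]$. So the convex combination $\mathbb{P}=\lambda\mathbb{P}_1+(1-\lambda)\mathbb{P}_2$ belongs to $\mathcal{P}$.

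For weak compactness, I would first note that every $\mathbb{P}\in\mathcal{P}$ is supported on the compact set $[0,M]^T$ (since $\mathbb{P}\sim\mathbb{Q}$ and $\mathbb{Q}(\|S\|_\infty\le M)=1$), so $\mathcal{P}$ is tight and relatively weakly compact by Prokhorov. It remains to verify weak closedness. Take $\mathbb{P}_n\to\mathbb{P}$ weakly with associated $\mathbb{Q}_n\in\mathcal{M}_M$, pass to a subsequence along which $\mathbb{Q}_n\to\mathbb{Q}$ weakly. Because $S$ and each $g_i$ are continuous and bounded on $[0,M]^T$, the martingale identities $\mathbb{E}_{\mathbb{Q}_n}[(S_{t+1}-S_t)\phi(S_1,\ldots,S_t)]=0$ for continuous bounded $\phi$ and $\mathbb{E}_{\mathbb{Q}_n}g_i=0$ pass to the limit, giving $\mathbb{Q}\in\mathcal{M}$; the Portmanteau theorem together with closedness of $\{\|S\|_\infty\le M\}$ yields $\mathbb{Q}\in\mathcal{M}_M$. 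Finally, the sandwich inequalities $\alpha\mathbb{E}_{\mathbb{Q}_n}[\phi]\le\mathbb{E}_{\mathbb{P}_n}[\phi]\le\beta\mathbb{E}_{\mathbb{Q}_n}[\phi]$ for $\phi\in C_b([0,M]^T)$ with $\phi\ge 0$ pass to $\alpha\mathbb{E}_\mathbb{Q}[\phi]\le\mathbb{E}_\mathbb{P}[\phi]\le\beta\mathbb{E}_\mathbb{Q}[\phi]$.

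The main obstacle is this last step: extending the bounds from continuous nonnegative test functions to all Borel sets so as to conclude $\alpha\mathbb{Q}\le\mathbb{P}\le\beta\mathbb{Q}$ and hence $\alpha\le d\mathbb{P}/d\mathbb{Q}\le\beta$. I would handle it by approximating $\mathbf{1}_F$ for a closed set $F\subset[0,M]^T$ by the decreasing sequence $\phi_k(x)=\max(0,1-k\,\mathrm{dist}(x,F))\in C_b$ and applying dominated convergence under $\mathbb{P}$ and $\mathbb{Q}$, then using inner regularity of Radon measures on the Polish space $[0,M]^T$ to extend from closed to arbitrary Borel sets. With $\alpha>0$ this also forces $\mathbb{P}\sim\mathbb{Q}$, so $\mathbb{P}\in\mathcal{P}$, completing the proof that $\mathcal{P}$ is weakly closed and therefore weakly compact.
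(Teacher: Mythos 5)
Your proposal is correct and follows essentially the same route as the paper: convexity and Assumption P(2) are immediate from the definition, and weak compactness is obtained via tightness of measures supported on $[0,M]^T$ (Prokhorov), passing the martingale and pricing identities to the weak limit, and letting the sandwich inequality $\alpha\mathbb{E}_{\mathbb{Q}_n}[f]\leq\mathbb{E}_{\mathbb{P}_n}[f]\leq\beta\mathbb{E}_{\mathbb{Q}_n}[f]$ pass to the limit. You supply somewhat more detail than the paper (the pointwise convex-combination computation for $d\mathbb{P}/d\mathbb{Q}$ and the extension of the density bounds from continuous test functions to Borel sets), but these are elaborations of steps the paper treats as obvious, not a different argument.
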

\begin{proof}
It is obvious that $\mathcal{P}$ is convex, nonempty, and satisfies \textbf{Assumption P}(2). Let $(\mathbb{P})_{n=1}^\infty\subset\mathcal{P}$. Then there exists $\mathbb{Q}_n\in\mathcal{M}_M$ that is equivalent to $\mathbb{P}_n$ satisfying $\alpha\leq d\mathbb{P}_n/d\mathbb{Q}_n\leq\beta$. Since $(\mathbb{P}_n)$ and $(\mathbb{Q}_n)$ are supported on $[0,M]^T$, they are tight, and thus relatively weakly compact from Prokhorov's theorem. By passing to subsequences, we may without loss of generality assume that there exist probability measures $\mathbb{P}$ and $\mathbb{Q}$ supported on $[0,M]^T$, such that $\mathbb{P}_n\xrightarrow{\mbox{\tiny{w}}}\mathbb{P}$ and $\mathbb{Q}_n\xrightarrow{\mbox{\tiny{w}}}\mathbb{Q}$. Since probability measures have a compact support it can be shown using the monotone class theorem that $\mathbb{Q}\in\mathcal{M}_M$. Let $f$ be any nonnegative, bounded and continuous function. Then
\begin{equation}
\alpha\mathbb{E}_{\mathbb{Q}_n}[f]\leq\mathbb{E}_{\mathbb{P}_n}[f]\leq\beta\mathbb{E}_{\mathbb{Q}_n}[f].
\notag\end{equation} 
Letting $n\rightarrow\infty$, we have 
\begin{equation}
\alpha\mathbb{E}_\mathbb{Q}[f]\leq\mathbb{E}_\mathbb{P}[f]\leq\beta\mathbb{E}_\mathbb{Q}[f].
\notag\end{equation} 
Hence, $\mathbb{P}\in\mathcal{P}$, which completes the proof.
\end{proof}

\section{An extension}

Now instead of assuming that the market has a finite number of options, we can assume that our model is calibrated to a continuum of call options with payoffs $(S_i-K)^+,\ K\in\mathbb{R}_+$ at each time $t=i$, and price 
$$\mathcal{C}(i,K)=\mathbb{E}^\mathbb{Q}\left[(S_i-K)^+\right].$$
It is well-known that knowing the marginal $S_i$ is equivalent to knowing the prices $\mathcal{C}(i,K)$ for all $K\geq0$; see \cite{BL78}.  
Hence, we can assume that the marginals of the stock price $S=(S_i)_{i=1}^T$ are given by $S_i\sim\mu_i$, where $\mu_1,\dotso,\mu_T$ are probability measures on $\mathbb{R}_+$. Let 
$$\mathcal{M}:=\{\mathbb{Q} \text { probability measure on } \mathbb{R}_+^T:\ S=(S_i)_{i=1}^T \text{ is a } \mathbb{Q}-\text{martingale};$$
$$\text{ for } i=1,\dotso,T,\ S_i \text{ has marginal } \mu_i \text{ and mean } s_0\}.$$
If we make the similar assumptions as before on the utility function and $\mathcal{P}$, and if we can generalize the super-hedging theorem in \cite{Nutz2} to the case of infinitely many options (we only need the version with one probability measure here, which is much weaker result then the full generalization of \cite{Nutz2}), we can get similar results correspondingly. Indeed, this can be seen from the proof of \leref{lemma2}.

\bibliographystyle{siam}
\bibliography{ref}

\end{document}